\newcommand{\mT}{\mathcal{T}}
\newcommand{\R}{{\mathbb R}}
\newcommand{\oO}{{\overline{\Omega}}}
\newcommand{\ut}{{\underline{t}}}
\newcommand{\pa}{{\partial}}
\newcommand{\Om}{{\Omega}}
\newcommand{\Ga}{{\Gamma}}
\newcommand{\Si}{{\Sigma}}
\newcommand{\La}{{\Lambda}}
\newcommand{\ka}{{\kappa}}
\newcommand{\bt}{{\beta}}
\newcommand{\ua}{{\underline{a}}}
\newcommand{\ub}{{\underline{b}}}
\newcommand{\n}{{\underline{n}}}
\newcommand{\ui}{{\underline{i}}}
\newcommand{\ou}{{\underline{u}}}
\newcommand{\ov}{{\underline{v}}}
\newcommand{\oo}{{\underline{\omega}}}
\newcommand{\utau}{{\underline{\tau}}}
\newcommand{\be}{\begin{equation}}
\newcommand{\ee}{\end{equation}}
\def\vs1{\vspace{1ex}}
\newtheorem{definition}{Definition}[section]
\newtheorem{theorem}{Theorem}[section]
\newtheorem{lemma}[theorem]{Lemma}
\newtheorem{proposition}{Proposition}[section]
\newtheorem{corollary}{Corollary}[section]
\newtheorem{remark}{Remark}[section]
\numberwithin{equation}{section}
\begin{document}
\title{\bf\normalsize  A MISSED PERSISTENCE PROPERTY FOR THE EULER EQUATIONS,
AND ITS EFFECT ON INVISCID LIMITS }
\author{by H.~Beir\~ao da Veiga and F. Crispo}
\maketitle \textit{\phantom{aaaaaaaaaaaaaaaaaaaaaaaaaa}}

\begin{abstract}
We consider the problem of the \emph{strong} convergence, as the
viscosity goes to zero, of the solutions to the three-dimensional
evolutionary Navier-Stokes equations under a Navier slip-type
boundary condition to the solution of the Euler equations under the
zero-flux boundary condition. In spite of the arbitrarily strong
convergence results proved in the flat boundary case, see
\cite{bvcrgr}, it was shown in reference \cite{bvcr-contra} that the
result is false in general, by constructing an explicit family of
smooth initial data in the sphere, for which the result fails. Our
aim here is to present a more general, simpler and incisive proof.
In particular, counterexamples can be displayed in arbitrary,
smooth, domains. As in \cite{bvcr-contra}, the proof is reduced to
the lack of a suitable persistence property for the Euler equations.
This negative result is proved by a completely different approach.

\vspace{.2cm}

{\bf Mathematics Subject Classification} 35Q30, 76D05, 76D09.

\vspace{.2cm}

{\bf Keywords.} Euler and Navier-Stokes equations, inviscid limit,
slip boundary conditions.

\end{abstract}

\bibliographystyle{amsplain}

\section{Introduction and some results.}
We investigate \emph{strong convergence, up to the boundary}, as
$\nu \rightarrow \,0\,,$ of the solutions $\,\ou^\nu\,$ of the
Navier-Stokes equations
\begin{equation}
\left\{
\begin{array}{l}
\partial_t\,\ou^\nu +\,(\ou^\nu \cdot\,\nabla)\,\ou^\nu+\,\nabla\,p=\,\nu\,\Delta\,\ou^\nu\, ,\\
{div}\,\ou^\nu =\,0\,,\\
\ou^\nu(0)=\,\ua\,,
\end{array}
\right. \label{1;4}
\end{equation}
under the boundary condition
\begin{equation}
\left\{
\begin{array}{l}
\ou^\nu\cdot \n=\,0,\\
\oo^\nu \times \,\n=\,0\,,
\end{array}
\right. \label{bcns}
\end{equation}
to the solution $\ou$ of the Euler equations
\begin{equation}
\left\{
\begin{array}{l}
\partial_t\,\ou +\,(\ou\cdot\,\nabla)\,\ou+\,\nabla\,p=\,0,\\
{div}\,\ou=\,0\,,\\
\ou(0)=\,\ua\,,
\end{array}
\right. \label{1;4e}
\end{equation}
under the zero-flux boundary condition
\begin{equation}
\ou\cdot\,\n=\,0\,.%
\label{bceu}
\end{equation}
We are interested in the three-dimensional situation. Here, and in
the sequel, we use the notation $\,\oo=\, curl \,\ou\,$.\par%
\begin{definition}
We say that a vector field $\,\ua\,$ is \emph{admissible} if it is
smooth and divergence free in  $\,\oO\,$, and satisfies the boundary
condition
\begin{equation}
\left\{
\begin{array}{l}
\ua\cdot \n=\,0,\\
\ub \times \,\n=\,0\,,
\end{array}
\right. \label{bcnes}
\end{equation}
where
\begin{equation}
\ub=\,curl \,\ua\,.%
\label{bes}
\end{equation}
\end{definition}
The above condition is usually referred to as a Navier slip-type
boundary condition.\par%
Recently the vanishing viscosity limit
problem under the above or similar Navier type conditions has been
studied in \cite{bvcr}, \cite{clopeau}, \cite{Kell}, \cite{Lopes},
in the 2D case, and in \cite{bvcr}, \cite{bvcr2},
\cite{bvcr-contra}, \cite{Chen}, \cite{xin}, in the 3D case. See
also \cite{XXW} for the magnetohydrodynamic system and
\cite{Belloutet} for a different approach to the inviscid limit for
the slip-type boundary value problem.

\vspace{0.2cm}

\emph{The domain $\Om\,:\,$} In the sequel $\,\Om$ is a bounded open
set in $\R^3\,$ locally situated on one side of its boundary, a
smooth manifold $\Ga$. The boundary $\,\Ga\,$ may consist of a
finite number of disjoint, connected components, $\,\Ga_j\,$,
$\,j=\,0,\,1,\,...,m\,,$ $\,m \geq\,0\,$. $\,\Ga_0$ denotes the
``external boundary". If $\,\Ga\,$ is not simply-connected we assume
the typical existence of $\,N\,$ mutually disjoint and transversal
cuts, after which $\,\Om\,$ becomes simply-connected
(see \cite{foias-temam} and \cite{temamlibro} for details).%

\vspace{0.2cm}

We denote by $\n=\,(n_1,\,n_2,\,n_3)\,$ the unit outward normal to
$\Ga\,,$ and denote by $\,\ka_j(s)\,, j=\,1,\,2\,,$ the
\emph{principal curvatures} of $\,\Ga\,$ at a point $\,s\,$. We set
\begin{equation}%
\Si=\,\{s\in\,\Ga\,:\, \ka_j(s)\neq\,0\,, \quad
j=\,1,\,2\,\}\,.%
\label{xigma}
\end{equation}
$\,\Si\,$ is the subset of boundary-points where the Gaussian
curvature $\,\ka_1\,\ka_2\,$ does not vanish. It is worth noting
that, for $\,\Om\,$ as above, $\,\Si\,$ is never empty.
Mostly, $\,\Si\,$ coincides with $\Ga\,$ itself.%

\vspace{0.2cm}

We recall that application of the operator $\,curl\,$ to the first
equation \eqref{1;4e} leads to the well known Euler vorticity
equation
\begin{equation}
\pa_t\,\oo\,-\,curl \,(\ou \times\,\oo)\, =\,0\,.%
\label{boh2}
\end{equation}
\begin{definition}
\rm{ We say that the Euler equations \eqref{1;4e}, under the
boundary condition \eqref{bceu}, satisfy the \emph{persistence
property} (with respect to the boundary condition $\oo \times
\,\n=\,0\,,$ and to the initial data $\ua\,$), if $\oo(0) \times
\,\n=\,0\,$ on $\,\Ga\,$ implies the existence of some $\,t_0>\,0\,$
(which may depend on $\,\ua\,$) such that $\oo(t) \times \,\n=\,0\,$
on $\,\Ga\,$, for each $\,t \in\,(0,\,t_0)\,$. Furthermore, we say
that the persistence
property holds, if it holds for all (smooth) initial data $\,\ua\,.$}%
\label{persistes}
\end{definition}

\vspace{0.2cm}

\begin{definition}
\rm{By \emph{strong convergence} we mean any (sufficiently strong)
convergence in $\,(0,\,T) \times \,\Om\,$  such that if
$\,\ou^\nu\,$  converges to $\,\ou\,$ with respect to this
convergence, and if $\oo^\nu \times \,\n=\,0\,$ on $\,\Ga\,$, then
necessarily $\oo \times \,\n=\,0\,$ on $\,(0,\,T) \times \,\Ga\,$.
\emph{Strong inviscid limit} is defined accordingly.}%
\label{hetlixas}
\end{definition}
Examples of strong convergence are, for instance, convergence in
$L^1(0,\,T; W^{s,q})\,$, for some $q >\, 1\,,$ and some
$s>\,1+\,\frac1q \,$, and convergence in $L^1(0,\,T; W^{2,1})\,.$
Recall that $\,L^1(0,\,T)\,$ convergence implies a.e. convergence in
$\,(0,\,T)\,,$ for suitable ``sub-sequences".%

\vspace{0.2cm}

A strong inviscid limit result, without a spatial boundary, was
proved in \cite{K3}. See also the more recent papers \cite{bvkaz}
and \cite{Masm}. In \cite{xin}, \cite{bvcr}, \cite{bvcr2}, and
\cite{bvcrgr}, strong inviscid limit results are proved under a
flat-boundary assumption. However, in the case of non-flat
boundaries the problem remained open. The arbitrarily strong
convergence results proved in \cite{bvcrgr}, some estimates proved
for non-flat boundaries in \cite{bvcr} and \cite{bvcr2}, and the
strong convergence results available in the two-dimensional case,
led to the conviction that strong convergence results held in the
general three-dimensional case, at least in ``moderately strong"
topologies. In spite of this guess, in reference \cite{bvcr-contra}
we have shown that the result is false in a sphere.

\vspace{0.2cm}

In reference \cite{bvcr-contra}, section 2, the following result is
proved.
\begin{theorem}
Let the initial data $\,\ua\,$ be admissible. Then:\par%
a) If a strong inviscid limit result holds, then necessarily the
Euler equations \eqref{1;4e}, \eqref{bceu} enjoy the persistence property.\par%
b) If the persistence property holds, then necessarily
\begin{equation}
curl\,(\ua \times \,\ub)\,\times \,\n=\,0%
\label{vanas}
\end{equation}
everywhere on $\,\Ga\,$.
\label{doutro}
\end{theorem}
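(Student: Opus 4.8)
The plan is to prove both parts by translating the persistence property — a statement about the boundary trace of the vorticity $\oo(t)$ — into a differential statement at $t=0$, and then reading off the necessary condition from the Euler vorticity equation \eqref{boh2}. For part a), suppose a strong inviscid limit result holds and that the initial datum $\ua$ is admissible, so in particular $\ub\times\n=0$ on $\Ga$, i.e. $\oo(0)\times\n=0$. By the flat-boundary-type or general a priori estimates for the Navier--Stokes system under \eqref{bcns}, the solutions $\ou^\nu$ satisfy $\oo^\nu\times\n=0$ on $\Ga$ for all $t$ (this is essentially the content of the boundary condition \eqref{bcns}, which is preserved by the flow). Since $\ou^\nu\to\ou$ in the strong sense of Definition \ref{hetlixas}, the defining property of strong convergence forces $\oo\times\n=0$ on $(0,T)\times\Ga$. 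Taking $t_0=T$ gives exactly the persistence property for this $\ua$; since admissible data are dense among (or at least include the relevant) smooth data, one concludes the persistence property holds. The only subtlety here is making precise that $\oo^\nu\times\n=0$ genuinely holds for the Navier--Stokes solutions; this is standard for the Navier slip-type condition \eqref{bcns} and can be cited.

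For part b), assume the persistence property holds and fix any admissible $\ua$. Then $\oo(0)\times\n=0$ on $\Ga$, and persistence gives a $t_0>0$ with $\oo(t)\times\n=0$ on $\Ga$ for $t\in(0,t_0)$. The key step is to differentiate the identity $\oo(t)\times\n=0$ in time at $t=0^+$. Since $\n$ is time-independent, $\frac{d}{dt}\big(\oo(t)\times\n\big) = \pa_t\oo(t)\times\n$, and evaluating at $t=0$ using the Euler vorticity equation \eqref{boh2}, namely $\pa_t\oo = curl\,(\ou\times\oo)$, we get
\begin{equation}
curl\,(\ua\times\ub)\times\n = 0 \quad\text{on }\Ga,
\label{eq:derivative-at-zero}
\end{equation}
which is precisely \eqref{vanas}. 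To make the differentiation rigorous one needs enough regularity of the local-in-time Euler solution $\ou$ up to the boundary — which is available for smooth admissible data by classical Euler well-posedness theory (e.g. Kato--Lai) — together with the fact that $\oo(t)\times\n=0$ on a time interval forces the one-sided derivative of that trace to vanish at $t=0$.

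The main obstacle I expect is the rigorous justification of taking the trace on $\Ga$ and differentiating it in $t$: one must know that $t\mapsto \oo(t)|_\Ga$ is differentiable into a suitable function space on $\Ga$, and that the trace operator commutes with $\pa_t$. This is handled by the smoothness of the Euler flow on a short time interval: for smooth admissible $\ua$, the solution $\ou\in C([0,t_0];H^s(\Om))$ with $s$ large, hence $\oo\in C^1([0,t_0];H^{s-2}(\Om))$ via the equation, and the trace theorem then gives $\oo(t)|_\Ga\in C^1$ into $H^{s-5/2}(\Ga)$, say. With this in hand, differentiating the identically-zero trace and substituting \eqref{boh2} at $t=0$ is immediate, and \eqref{vanas} follows. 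A secondary, purely bookkeeping point is that the cross product with the (smooth, fixed) normal $\n$ commutes with differentiation in $t$ and with restriction to $\Ga$, which is clear.
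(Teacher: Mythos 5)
Your proposal is correct and follows essentially the same route as the paper: part a) is read off immediately from Definition \ref{hetlixas} together with the fact that the Navier--Stokes solutions carry the imposed boundary condition $\oo^\nu\times\n=0$, and part b) is obtained by crossing the vorticity equation \eqref{boh2} with $\n$ and noting that persistence forces $\pa_t(\oo\times\n)=0$ on $\Ga$ at $t=0$, leaving \eqref{vanas}. The regularity bookkeeping you add is harmless but not needed beyond what the paper tacitly assumes for smooth data.
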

The proof of this particularly useful result is astonishingly
simple. Actually, a strong inviscid limit result immediately implies
the persistence property for the Euler equations. Assume now the
persistence property. External multiplication of \eqref{boh2} by
$\,\n\,$ gives
\begin{equation}
\pa_t\,(\oo \times \, \n \,)-\,curl\,(\ou \times
\oo)\,\times \n =\,0\,.%
\label{vort}
\end{equation}
Since the persistency property  holds, the time derivative in the
above equation must vanish on $\,\Gamma\,,$ at time $\,t=\,0\,$. So,
the second term must verify this same property. That such a simple
short-cut remained hidden may be due to its being extremely elementary.\par%
For convenience, we state the above result in the following
equivalent form.
\begin{theorem}
Let $\,\ua\,$ be an admissible vector field. Then:\par%
a) If, in some point $\,x_0\in\,\Ga\,,$ the inequality
\begin{equation}
curl\,(\ua \times \,\ub)\,\times \,\n \neq\,0%
\label{noponto}
\end{equation}
holds, then the persistence property, with respect to the initial
data $\ua\,,$ fails.\par%
b) If the persistence property fails then, necessarily, any strong
inviscid limit result fails.
\label{problas}
\end{theorem}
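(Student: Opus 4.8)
The plan is to obtain Theorem~\ref{problas} directly from Theorem~\ref{doutro} by contraposition; no fresh analysis is required, since the two statements are logically equivalent. For part~a) I would contrapose Theorem~\ref{doutro}, part~b): that result asserts that whenever the persistence property holds for the admissible datum $\ua$, one has $curl\,(\ua \times \ub) \times \n = 0$ at \emph{every} $x_0 \in \Ga$; negating the conclusion --- i.e.\ assuming \eqref{noponto} at a single point $x_0$ --- therefore forces the negation of the hypothesis, namely that the persistence property with respect to $\ua$ fails. For part~b) I would contrapose Theorem~\ref{doutro}, part~a): since a strong inviscid limit result implies the persistence property, a failure of that property rules out every strong inviscid limit result.

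The one point deserving a line of care is the quantifier hidden in the words ``the persistence property'': Definition~\ref{persistes} separates the property \emph{relative to a fixed datum} from the property \emph{for all smooth data}. The short argument for Theorem~\ref{doutro}, part~b), recalled just after its statement, uses only persistence relative to the given $\ua$ --- it evaluates \eqref{vort} at $t=0$ along the single Euler trajectory issuing from $\ua$, whose initial vorticity satisfies $\ub \times \n = 0$ because $\ua$ is admissible --- so its contrapositive yields precisely the failure of persistence relative to that same $\ua$, which is what part~a) claims. In part~b), failure of persistence for even one admissible $\ua$ already obstructs any general strong inviscid limit result: by the boundary condition \eqref{bcns} the Navier--Stokes vorticities obey $\oo^\nu \times \n = 0$ on $\Ga$, hence by Definition~\ref{hetlixas} strong convergence would propagate $\oo \times \n = 0$ to the Euler limit on $(0,T)\times\Ga$, i.e.\ would deliver the persistence property.

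I do not anticipate any genuine obstacle: all the substance lies in Theorem~\ref{doutro}, and Theorem~\ref{problas} merely recasts it into the form that makes the construction of counterexamples immediate --- it will be enough to exhibit a single admissible field $\ua$ for which \eqref{noponto} holds at one boundary point, and then invoke part~a) followed by part~b).
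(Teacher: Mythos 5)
Your proposal is correct and coincides with the paper's own treatment: the paper introduces Theorem~\ref{problas} explicitly as an ``equivalent form'' of Theorem~\ref{doutro}, i.e.\ its contrapositive, with all the substance residing in the short argument given for Theorem~\ref{doutro} (evaluating \eqref{vort} at $t=0$ for part~b), and the immediate implication from strong convergence to persistence for part~a)). Your extra care about the quantifier in ``persistence property'' --- relative to the fixed datum $\ua$ versus for all data --- is consistent with how the paper uses the statement.
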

Clearly, if the condition \eqref{noponto} holds in $\,x_0\,,$ it
holds in a $\,\Ga-$neighborhood of this same point.\par%
It follows from the above theorem that in order to prove the failure
of the persistence property and, a fortiori, that of strong inviscid
limit results, it is sufficient to show the existence of admissible
vector fields $\,\ua\,$  which satisfy \eqref{noponto} somewhere in
$\,\Ga\,$. We will show that, given $\Om\,$ as above, there exist a
large family of such $\,\ua\,$. For fixing ideas, we state our main
result in its simplest form. Actually, our argument leads to more
precise and deeper versions of Theorem \ref{teobanilias} below, as
the interested reader may verify. However, to be clear and concise,
we limit ourselves to the following statement.
\begin{theorem}
Let $\,x_0\,$  be a boundary point where the Gaussian curvature does
not vanish, that is
\begin{equation}
\ka_1(x_0)\,\ka_2(x_0) \neq \,0\,.
\label{gauss}
\end{equation}
Then, there are admissible vector fields $\,\ua\,$ for which the
inequality \eqref{noponto} holds at a sequence of boundary points
$\,x_n\,,$ convergent to $\,x_0\,$. So, persistence property and
strong vanishing limit results fail in general.%
\label{teobanilias}
\end{theorem}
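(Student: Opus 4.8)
The plan is to reduce the theorem to a purely local, pointwise computation on the boundary near $\,x_0\,$. By Theorem \ref{problas}, it suffices to exhibit an admissible $\,\ua\,$ for which $\,curl\,(\ua \times \,\ub)\,\times \,\n \neq 0\,$ at points arbitrarily close to $\,x_0\,$. First I would set up local coordinates: choose a boundary patch around $\,x_0\,$ in which $\,\Ga\,$ is a graph, and use the two principal directions at $\,x_0\,$ together with the normal $\,\n\,$ as a (non-orthonormal away from $\,x_0\,$, but orthonormal at $\,x_0\,$) moving frame. In these coordinates one writes out $\,\ub=\,curl\,\ua\,$ and then $\,curl\,(\ua \times \,\ub)\,$, and evaluates the tangential components of this last vector on $\,\Ga\,$. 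The admissibility constraints are: $\,\mathrm{div}\,\ua = 0\,$ in $\,\oO\,$, $\,\ua \cdot \n = 0\,$ on $\,\Ga\,$, and $\,\ub \times \n = 0\,$ on $\,\Ga\,$ (the last being two scalar conditions, namely the tangential components of $\,\ub\,$ vanish on $\,\Ga\,$).

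The key step is to recognize that $\,curl\,(\ua \times \,\ub)\,\times \,\n\,$ on $\,\Ga\,$, subject to the admissibility conditions, reduces to an expression involving only boundary data and the second fundamental form. Using the vector identity $\,curl\,(\ua \times \,\ub) = \ua\,\mathrm{div}\,\ub - \ub\,\mathrm{div}\,\ua + (\ub\cdot\na)\ua - (\ua\cdot\na)\ub\,$, and $\,\mathrm{div}\,\ub = \mathrm{div}\,curl\,\ua = 0\,$, $\,\mathrm{div}\,\ua = 0\,$, this collapses to $\,(\ub\cdot\na)\ua - (\ua\cdot\na)\ub\,$. On $\,\Ga\,$ we have $\,\ub = b_n\,\n\,$ for some scalar $\,b_n\,$ (the normal component), so $\,(\ub\cdot\na)\ua = b_n\,(\n\cdot\na)\ua\,$; and since $\,\ua\,$ is tangent to $\,\Ga\,$ along $\,\Ga\,$, the tangential part of $\,(\ua\cdot\na)\ub\,$ is governed by how the tangential derivatives of $\,\ub\,$ behave — but $\,\ub\,$ is purely normal on $\,\Ga\,$, so differentiating along $\,\Ga\,$ brings in the Weingarten map (the shape operator), producing terms of the form $\,-\,b_n\,(\na_{\ua}\n)\,$, i.e. terms proportional to $\,\ka_j\,$. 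The upshot is that the obstruction is, to leading order, a bilinear-in-curvature-and-$b_n$ quantity which one can make nonzero precisely because the Gaussian curvature $\,\ka_1(x_0)\ka_2(x_0)\neq 0\,$ forces both principal curvatures to be nonzero. I would then construct $\,\ua\,$ explicitly near $\,x_0\,$: pick a compactly supported tangential vector field on $\,\Ga\,$ with $\,b_n\,$ designed so the curvature term does not cancel, extend it into $\,\Om\,$ keeping it divergence-free (using a stream-type or Bogovskii-type construction, or simply a careful Taylor expansion in the normal variable to enforce the constraints to the needed order), and then patch by a cutoff so that the global field is smooth, divergence-free, and admissible, while leaving the local obstruction untouched.

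After that, I would verify that the constructed $\,\ua\,$ is genuinely admissible: smoothness and compact support are automatic from the construction; $\,\mathrm{div}\,\ua=0\,$ holds by design; $\,\ua\cdot\n=0\,$ on $\,\Ga\,$ by choosing the normal component of $\,\ua\,$ to vanish on $\,\Ga\,$; and $\,\ub\times\n = 0\,$ on $\,\Ga\,$ — the genuinely delicate admissibility condition — must be arranged by prescribing enough derivatives of $\,\ua\,$ in the normal direction at $\,\Ga\,$. Concretely, one expands $\,\ua\,$ to second order in the signed distance to $\,\Ga\,$; the condition $\,(curl\,\ua)_{tan}=0\,$ on $\,\Ga\,$ translates into a linear relation between the first-order normal derivatives of the tangential components of $\,\ua\,$ and the tangential derivatives of its normal component, which can always be solved. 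Finally, one recomputes $\,curl\,(\ua\times\ub)\times\n\,$ at the chosen boundary points and checks it is nonzero at a sequence $\,x_n\to x_0\,$ — this follows because the dominant curvature term is nonzero at $\,x_0\,$ by \eqref{gauss} and hence nonzero on a neighborhood, and the lower-order terms we can keep under control or make vanish by the freedom in the construction.

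The main obstacle I expect is the bookkeeping in enforcing $\,\ub\times\n=0\,$ on $\,\Ga\,$ simultaneously with $\,\mathrm{div}\,\ua=0\,$ while retaining a nonzero value of the target quantity: these conditions couple the normal derivatives of $\,\ua\,$ on $\,\Ga\,$, and one must check that the linear system that prescribes them is solvable \emph{and} that its solution does not force the leading curvature term in $\,(\ub\cdot\na)\ua - (\ua\cdot\na)\ub\,$ to vanish. The geometric input — that both principal curvatures are nonzero at $\,x_0\,$ — is exactly what guarantees there is ``room'' to satisfy the constraints without killing the obstruction; the computation should make transparent why a flat boundary (where $\,\ka_1=\ka_2=0\,$) escapes this mechanism, consistent with the positive results of \cite{bvcrgr}. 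A secondary, more routine, difficulty is the globalization: cutting off the local construction and confirming the cutoff preserves divergence-freeness (which may require a small divergence-correcting term supported away from $\,x_0\,$, via Bogovskii's operator) and all three admissibility conditions, without disturbing the neighborhood of $\,x_0\,$ where \eqref{noponto} is to hold.
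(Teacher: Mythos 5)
Your plan is essentially sound and it isolates the same two pillars as the paper: the reduction via Theorem \ref{problas}, and a boundary identity showing that, for admissible $\,\ua\,$, the obstruction $\,curl\,(\ua \times \,\ub)\,\times \,\n\,$ on $\,\Ga\,$ collapses to a curvature-weighted expression in the boundary data (the paper's Theorem \ref{geom} gives it explicitly as $\,-2\,b_3\,(\ka_2 a_2\,\ui_1-\ka_1 a_1\,\ui_2)\,$). Your derivation of that identity via $\,curl\,(\ua\times\ub)=(\ub\cdot\na)\ua-(\ua\cdot\na)\ub\,$ plus the Weingarten map is a legitimate alternative to the paper's explicit computation in principal-curvature coordinates; just be aware that the final coefficient comes from a genuine cancellation (three terms $\,\ka_2 a_1 b_3\,$, $\,-\ka_1 a_1 b_3\,$, $\,-(\ka_1+\ka_2)a_1 b_3\,$ combine to $\,-2\ka_1 a_1 b_3\,$), where the second and third inputs are exactly the normal-derivative relations forced by $\,b_1=b_2=0\,$ on $\,\Ga\,$ and by $\,div\,\ub=0\,$; your sketch names the first of these but not the second, and both are needed to land on a clean nonzero answer. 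Where the two arguments really diverge is the construction of the admissible field. The paper prescribes $\,\ub=\bt(s)\,\n\,$ on $\,\Ga\,$ with $\,\bt\,$ of zero mean on each boundary component, extends it divergence-free, and recovers $\,\ua\,$ by solving the div-curl system \eqref{vaquideo}; admissibility is then automatic (since $\,curl\,\ua\,$ is purely normal on $\,\Ga\,$ by construction) and no cutoffs or corrections are needed. It then needs Proposition \ref{propco} to guarantee $\,a_j\neq 0\,$ at a sequence $\,x_n\to x_0\,$, because $\,b_3\,$ on $\,\Ga\,$ is the surface curl of the tangential trace of $\,\ua\,$ and cannot be prescribed independently of it --- a coupling your proposal glosses over when you speak of designing $\,b_n\,$ and the tangential field separately. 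Your route, fixing the tangential trace first, can in principle get the obstruction at $\,x_0\,$ itself, which is slightly stronger.

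The one step of your plan that I would not accept as written is the globalization. A Bogovskii correction of the divergence error $\,\na\chi\cdot\ua\,$ produced by the cutoff will in general have support reaching $\,\Ga\,$, and nothing in the standard construction preserves the condition $\,(curl\,\ua)\times\n=0\,$ there; you would either have to arrange the error to be supported strictly inside $\,\Om\,$ (not automatic for a boundary patch) or replace the whole local construction by a vector potential, $\,\ua=curl\,\ow\,$ with $\,\ow\,$ supported in the patch, so that divergence-freeness is free and the admissibility conditions become jet conditions on $\,\ow\,$ at $\,\Ga\,$. Without one of these fixes, the field you produce need not be admissible on all of $\,\Ga\,$, and the reduction through Theorem \ref{problas} does not apply. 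This is repairable, but it is precisely the difficulty the paper's div-curl construction is designed to avoid.
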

By the way, note that when \eqref{noponto} or \eqref{gauss} hold in
some point, they hold in a neighborhood of this same point.
\section{Remarks}
Boundedness of $\,\Om\,$ is not essential here. The existence of
points $\,x_0 \in\,\Ga\,$ where the Gaussian curvature does not
vanish is sufficient to apply our argument.%

\vspace{0.2cm}

We overlook to consider boundary points where only one of the two
principal curvatures does not vanish. It could be of some interest
to study this case, by taking into account equation \eqref{rotsen}
below. This situation applies, for instance, to the case in which
$\,\Om\,$ is a cylinder, and slip boundary conditions are prescribed
only on the lateral boundary (a developable surface), and
periodicity is assumed in the ruling's direction.%

\vspace{0.2cm}

We prove the identity \eqref{rotsen} in the case of non umbilical
points $\,x_0\,$. If the principal curvatures coincide at $\,x_0\,,$
our proof needs some modification since the local system of
coordinates used in the proof may not exist (for instance, it does
not exist if $\,\Ga\,$ is spherical near $\,x_0\,$). We leave to the
interested reader the proof of \eqref{rotsen} under the assumption
$\,\ka_1=\,\ka_2\,$. Note, however, that this particular situation
can be bypassed here since if in a connected, closed, smooth surface
all points are umbilical, the surface is a sphere. And this case was
already treated in reference \cite{bvcr2}.

\vspace{0.2cm}

We note that our negative result does not exclude inviscid limit
results in weaker spatial norms, such as $H^1(\Omega)$. In this
regard we refer to paper \cite{xin2}, where this kind of convergence
is proved under one of the following additional assumptions :
$\,(\,\oo^\nu-\oo\,)\times\, \n\,=0$ on $(0,T)\times\Ga\,$, where
$\,\oo^\nu\,$ and $\,\oo$ are the vorticity for the Navier-Stokes
and Euler equations respectively, or $\ub=0$ on $\Ga$\,.

\begin{remark}
\rm{On flat portions of the boundary, the slip boundary condition
\eqref{bcns} coincides with the classical Navier boundary condition
\begin{equation}
\left\{
\begin{array}{l}
\ou\cdot \n=\,0,\\
\ut \cdot \,\utau=\,0\,,
\end{array}
\right. \label{boundary-ho}
\end{equation}
where $\,\utau\,$ stands for any arbitrary unit tangential vector.
Here $\,\ut\,$ is the stress vector defined by $\,\ut
=\,\mT\cdot\,\n\,$, where the stress tensor $\,\mT\,$ is defined by
\begin{equation*}
\mT =-\,p\,I+\,\frac{\nu}{2} \,(\nabla \ou+\nabla \ou^T)\,.
\end{equation*}
These conditions were introduced by Navier  in \cite{Navier}, and
derived by Maxwell in \cite{max} from the kinetic theory of gases.
For general boundaries
\begin{equation}
\ut \cdot \,\utau=\,\frac{\nu}{2} \,(\oo \times \,\n) \cdot \,\utau
\,-\,\nu \,\mathcal{K}_{\tau}\, \ou \cdot \,\utau\,,%
\label{parec}
\end{equation}
where $\,\mathcal{K}_{\tau}\,$ is the principal curvature in the
$\,\utau\,$ direction, positive if the corresponding center of
curvature lies inside $\,\Om\,$.\par%
Note that our counter-example in \cite{bvcr-contra} and the results
presented here do not exclude that strong inviscid limit results
 hold under the Navier boundary condition \eqref{boundary-ho}
in the non-flat boundary case. To prove or disprove this kind of
result remains a challenging open problem.}%
\label{tolosa}
\end{remark}
\section{A main identity}
The Theorem \ref{problas} places the non-linear term $\,curl\,(\ua
\times \,\ub)\,\times \,\n\,$ in a central position. So, reducing
the order of this second order term is here very helpful. This
reduction is done by proving the identity \eqref{rotsen}. In our
context, this identity has another valuable merit. It makes explicit
a precise dependence on curvature,
which is essential in the sequel.%

\vspace{0.2cm}

Before stating Theorem \ref{geom} we introduce some
notation.\par%
Given $\,x_0 \in \,\Ga_0\,$ we introduce, in a sufficiently small
neighborhood of $\,x_0\,$, a suitable system of orthogonal
curvilinear coordinates $\,(\xi_1,\,\xi_2,\,\xi_3)\,$. See, for
instance, \cite{krey}, in particular, chapter 8, paragraph 89. The
surface $\Ga$ is locally described by the equation $\xi_3=\,0$,
moreover the surfaces $\xi_3=\,constant$ are parallel to $\Ga$ in
the usual sense. They are located at distance $\,|\,\xi_3\,|\,$ from
$\,\Ga\,\,$. The coordinate $\xi_3$ increases outside $\Om$.
Further, on each parallel surface the lines $\xi_j=\,constant\,$,
$\,j=\,1,\,2\,$ are lines of curvature, hence tangent to a principal
direction. Recall that the lines of curvature on parallel surfaces
correspond to each other. The point $\,x_0\,$ has zero coordinates.
We denote by $\ui_{j}$ the unit vector, tangent to the $\xi_j$ line,
and pointing in the direction of increasing $\xi_j$. Hence, at each
point of a parallel surface, the vectors $\ui_{j}$, $j=\,1,\,2$, are
tangent to the principal directions. The corresponding normal
curvatures, the so called principal curvatures, are denoted by
$\,\ka_j$, $j=\,1,\,2$. They take the maximum and the minimum of the
set of all the normal curvatures. The unit vector $\ui_3$ coincides
with the normal $\,\n\,$. Roughly speaking, concerning signs, it is
sufficient to remark that the curvature of a normal section of $\Ga$
at a point $\,x\,$ is positive whenever the normal section of $\Om$
is convex at $\,x\,$.\par%
Components of vector fields are with respect to the orthogonal basis
$\ui_j$, $j=\,1,\,2\,,3\,$. For instance
$\,\ua=\,a_1\,\ui_1+\,a_2\,\ui_2+\,a_3\,\ui_3\,.$\par%
A point is \emph{umbilical} if $\,\ka_1=\,\ka_2\,$. If
$\,\ka_1=\,\ka_2=\,0\,$ the point is a \emph{planar} (or parabolic
umbilical) point. As already remarked we assume, for convenience,
that $\,x_0\,$ is not umbilical. In this case some modifications are needed.\par%
At each point, the ordered orthogonal basis
$\ui_{1},\,\ui_{2},\,\ui_{3}$ is assumed to be positively
(right-handed) oriented. If $\,s(\xi_j)$ denotes the arc length
along a $\xi_j$-line, the (positive) $h_j\,$ scale functions are
defined by
$$
h_j=\,\frac{d\,s(\xi_j)}{d\,\xi_j}\,.
$$
Note that $\,h_3=\,1\,$ everywhere. In particular,
\begin{equation}
\frac{\pa \,h_3}{\pa\,\xi_j}=\,0\,, \quad j=\,1,\,2\,.%
\label{wabe}
\end{equation}
We recall that
\begin{equation}
\ka_1=\,\frac{1}{h_3 \,h_1} \,\frac{\pa \,h_1}{\pa\,\xi_3}\,,\quad
\ka_2=\,\frac{1}{h_3 \,h_2} \,\frac{\pa \,h_2}{\pa\,\xi_3}\,.
\label{kapa}
\end{equation}

\begin{theorem}
Let $\,\ua\,$ be an admissible vector field. Then, the identity
\begin{equation}
curl\,(\ua \times \,\ub)\,\times \,\n=\,-\,2\,b_3
\,(\ka_2\,a_2\,\ui_1 -\,\ka_1\,a_1\,\ui_2\,)%
\label{rotsen}
\end{equation}
holds on $\,\Ga\,.$%
\label{geom}
\end{theorem}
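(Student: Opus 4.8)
The plan is to lower the order of $\,curl\,(\ua\times\ub)\,$ via the classical identity $\,curl\,(\ua\times\ub)=\,\ua\,(\mathrm{div}\,\ub)-\,\ub\,(\mathrm{div}\,\ua)+\,(\ub\cdot\na)\,\ua-\,(\ua\cdot\na)\,\ub\,$ and to note that both divergences vanish, since $\,\ua\,$ is admissible and $\,\ub=\,curl\,\ua\,$. Hence $\,curl\,(\ua\times\ub)=\,(\ub\cdot\na)\,\ua-\,(\ua\cdot\na)\,\ub\,$, and the task reduces to evaluating these two convective terms on $\,\Ga\,$ and crossing them with $\,\n=\,\ui_3\,$. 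I would do this in the curvilinear frame $\,(\xi_1,\xi_2,\xi_3)\,$ above, using the standard frame-derivative formulas $\,\pa_{\xi_i}\ui_j=\,\frac{1}{h_j}(\pa_{\xi_j}h_i)\,\ui_i\,$ for $\,i\neq j\,$ and $\,\pa_{\xi_i}\ui_i=\,-\sum_{j\neq i}\frac{1}{h_j}(\pa_{\xi_j}h_i)\,\ui_j\,$ (a consequence of $\,\pa_{\xi_j}(h_i\ui_i)=\,\pa_{\xi_i}(h_j\ui_j)\,$ and orthonormality; see \cite{krey}). Combined with \eqref{wabe} and \eqref{kapa}, read as $\,\pa_{\xi_3}h_j=\,\ka_j h_j\,$, these give $\,\pa_{\xi_3}\ui_1=\,\pa_{\xi_3}\ui_2=\,\pa_{\xi_3}\ui_3=\,0\,$ and $\,\pa_{\xi_1}\ui_3=\,\ka_1 h_1\,\ui_1\,$, $\,\pa_{\xi_2}\ui_3=\,\ka_2 h_2\,\ui_2\,$.

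Next I would feed in the boundary data. By \eqref{bcnes}, on $\,\Ga\,$ one has $\,a_3=\,0\,$ and $\,b_1=\,b_2=\,0\,$; since these vanish identically on the surface $\,\xi_3=\,0\,$, their tangential derivatives ($\,\pa_{\xi_1}a_3\,$, $\,\pa_{\xi_2}a_3\,$, $\,\pa_{\xi_k}b_l\,$ for $\,k,l\in\{1,2\}\,$) vanish there as well. Using $\,\ub=\,b_3\ui_3\,$ on $\,\Ga\,$ and keeping only the surviving terms, a short computation gives
\[
(\ub\cdot\na)\,\ua\times\n\big|_{\Ga}=\,b_3\big[(\pa_{\xi_3}a_2)\,\ui_1-(\pa_{\xi_3}a_1)\,\ui_2\big],\qquad
(\ua\cdot\na)\,\ub\times\n\big|_{\Ga}=\,b_3\big(\ka_2 a_2\,\ui_1-\ka_1 a_1\,\ui_2\big),
\]
where I used the orientation $\,\ui_1\times\ui_2=\,\ui_3\,$, hence $\,\ui_1\times\ui_3=\,-\ui_2\,$, $\,\ui_2\times\ui_3=\,\ui_1\,$, and the fact that in $\,(\ua\cdot\na)\,\ub\,$ the only terms surviving on $\,\Ga\,$ are $\,b_3\,\pa_{\xi_k}\ui_3\,$ and $\,(\pa_{\xi_k}b_3)\,\ui_3\,$, $\,k=\,1,2\,$.

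The last ingredient is the value of the normal derivatives $\,\pa_{\xi_3}a_1\,$, $\,\pa_{\xi_3}a_2\,$ on $\,\Ga\,$, and this is exactly where the vorticity condition $\,\ub\times\n=\,0\,$ is used in full (rather than merely as ``$\,b_1=\,b_2=\,0\,$ pointwise''). Expressing $\,\ub=\,curl\,\ua\,$ in the orthogonal frame, its tangential components are
\[
b_1=\,\frac{1}{h_2 h_3}\big[\pa_{\xi_2}(h_3 a_3)-\pa_{\xi_3}(h_2 a_2)\big],\qquad
b_2=\,\frac{1}{h_3 h_1}\big[\pa_{\xi_3}(h_1 a_1)-\pa_{\xi_1}(h_3 a_3)\big];
\]
evaluating on $\,\Ga\,$, where $\,h_3=\,1\,$, $\,a_3=\,0\,$, $\,\pa_{\xi_1}a_3=\,\pa_{\xi_2}a_3=\,0\,$, and using \eqref{kapa}, the requirements $\,b_1=\,b_2=\,0\,$ become exactly $\,\pa_{\xi_3}a_2=\,-\ka_2 a_2\,$ and $\,\pa_{\xi_3}a_1=\,-\ka_1 a_1\,$ on $\,\Ga\,$. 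Substituting into the first cross product turns it into $\,-b_3(\ka_2 a_2\,\ui_1-\ka_1 a_1\,\ui_2)\,$; subtracting the second cross product then yields $\,curl\,(\ua\times\ub)\times\n=\,-2\,b_3(\ka_2 a_2\,\ui_1-\ka_1 a_1\,\ui_2)\,$, which is \eqref{rotsen}. I expect the main obstacle to be purely computational: handling the curvilinear calculus with the right scale factors and signs (in the frame derivatives, in the curl, and in the cross products) and carefully tracking which partial derivatives of which components may survive on $\,\Ga\,$. Once the vorticity boundary condition pins down $\,\pa_{\xi_3}a_1\,$ and $\,\pa_{\xi_3}a_2\,$, the identity falls out at once.
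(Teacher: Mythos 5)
Your proof is correct, and it takes a genuinely different (though closely related) route from the paper's. The paper works directly with the curvilinear expression \eqref{exprot} for $\,curl\,(\ua\times\ub)\,$: it computes the two tangential components of the curl of the cross product, kills the tangential-derivative terms using $\,a_3=b_1=b_2=0\,$ on $\,\Ga\,$, and then needs the normal derivative of $\,b_3\,$ on $\,\Ga\,$, which it extracts from $\,div\,\ub=0\,$ restricted to the boundary (equations \eqref{mah}--\eqref{drei}). You instead first apply the vector identity $\,curl\,(\ua\times\ub)=(\ub\cdot\na)\ua-(\ua\cdot\na)\ub\,$ (legitimate, since $\,div\,\ua=0\,$ by admissibility and $\,div\,curl\,\ua=0\,$ identically) and then evaluate the two convective terms in the adapted frame. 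This reorganization has a small structural payoff: in $\,(\ua\cdot\na)\ub\,$ the normal derivative of $\,\ub\,$ is multiplied by $\,a_3=0\,$, so you never need $\,\pa_{\xi_3}b_3\,$ and hence never need the boundary restriction of $\,div\,\ub=0\,$; the curvature enters instead through the frame derivatives $\,\pa_{\xi_1}\ui_3=\ka_1h_1\ui_1\,$, $\,\pa_{\xi_2}\ui_3=\ka_2h_2\ui_2\,$. The price is that you must invoke the full set of frame-derivative formulas for $\,\pa_{\xi_i}\ui_j\,$, which the paper avoids by only ever using the scalar curl and divergence expressions. Both arguments use the conditions $\,b_1=b_2=0\,$ on $\,\Ga\,$ in the same essential, differentiated way, namely to pin down $\,\pa_{\xi_3}a_1=-\ka_1a_1\,$ and $\,\pa_{\xi_3}a_2=-\ka_2a_2\,$ (your computation here coincides with the paper's \eqref{zwei}), and your sign and orientation conventions ($\,\ui_1\times\ui_3=-\ui_2\,$, $\,\ui_2\times\ui_3=\ui_1\,$) are consistent with \eqref{semn}. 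Each factor of $\,-b_3(\ka_2a_2\ui_1-\ka_1a_1\ui_2)\,$ coming from the two convective terms is correct, so the final identity \eqref{rotsen} follows as you state.
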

\begin{proof}
We recall the following expression for the $curl$ of a vector field
$\ov$ in curvilinear, orthogonal, coordinates:
\begin{equation}
\begin{array}{l}
curl \,\ov=\,\frac{1}{h_2\,h_3} \,\big[\,\frac{\partial \,(h_3
\,v_3)}{\partial\,\xi_2}-\, \frac{\partial \,(h_2
\,v_2)}{\partial\,\xi_3} \, \big]\,\ui_1+\\
\\
\frac{1}{h_3\,h_1} \,\big[\,\frac{\partial \,(h_1
\,v_1)}{\partial\,\xi_3}-\, \frac{\partial \,(h_3
\,v_3)}{\partial\,\xi_1} \, \big]\,\ui_2 + \,\frac{1}{h_1\,h_2}
\,\big[\,\frac{\partial \,(h_2 \,v_2)}{\partial\,\xi_1}-\,
\frac{\partial \,(h_1
\,v_1)}{\partial\,\xi_2} \, \big]\,\ui_3 \,.%
\end{array}
\label{exprot}
\end{equation}
Since (recall that $\,\ub=\, curl \,\ua\,$)
\begin{equation}
curl\,(\ua \times \,\ub)\,\times \,\n=\,[curl\,(\ua \times \,\ub)]_2
\,\ui_1 -\,[curl\,(\ua \times \,\ub)]_1 \,\ui_2\,,%
\label{semn}
\end{equation}
we are interested in the two tangential components of $curl\,(\ua
\times \,\ub)$. Hence, by setting $\ov=\,\ua \times \,\ub\,$ in
\eqref{exprot}, we want to determine the two first terms on the left
hand side of \eqref{exprot}. Due to the similarity of these two
terms it is sufficient to treat one of them. We consider the first
one. This leads to
\begin{equation}
[curl\,(\ua \times \,\ub)]_1=\,\frac{1}{h_2\,h_3}\, \frac{\partial
\,}{\partial\,\xi_2}\,[\,h_3\,(\ua_1\,\ub_2-\,\ua_2\,\ub_1\,)\,]-\,\frac{1}{h_2\,h_3}\,
\frac{\partial\,}{\partial\,\xi_3}\,[\,h_2\,(\ua_3\,\ub_1-\,\ua_1\,\ub_3\,)\,]\,.%
\label{ium}
\end{equation}
Note that
$$
a_3=\,b_1=\,b_2=\,0
$$
for $\,\xi_3=\,0\,$, hence
$$
\frac{\partial \,a_3}{\partial\,\xi_j}=\,\frac{\partial
\,b_i}{\partial\,\xi_j}=\,0\,,
$$
for $\,i,\,j=\,1,\,2\,$.\, It follows that the first term on the
right hand side of equation \eqref{ium} and the "first half" of the
second term vanish on $\Ga$. So,
$$
[curl\,(\ua \times \,\ub)]_1= \,\frac{1}{h_2\,h_3}\,\frac{\partial
\,}{\partial\,\xi_3}\,(\,h_2\,a_1\,b_3\,).
$$
Consequently,
\begin{equation}
[curl\,(\ua \times \,\ub)]_1=\,\ka_2\,a_1\,b_3
+\,\frac{1}{h_3}\,b_3\,\frac{\partial\,a_1}{\partial\,\xi_3}+\,
\frac{1}{h_3}\,a_1\,\frac{\partial\,b_3}{\partial\,\xi_3}\,.%
\label{ein}
\end{equation}
Since $\,b_2 =\,0\,$ on $\,\Ga\,$, it follows from \eqref{exprot}
applied to $\ua$ that
\begin{equation}
\frac{\partial \, a_1}{\partial\,\xi_3}=\,-
\frac{a_1}{h_1}\,\frac{\partial \, h_1}{\partial\,\xi_3}\,.%
\label{zwei}
\end{equation}
We have appealed to $a_3=\,\frac{\partial \,
a_3}{\partial\,\xi_1}=\,0\,$ on $\Ga$, and to \eqref{wabe}.\par%
Next
\begin{equation}
div \,\ub=\, \frac{1}{h_1\,h_2\,h_3} \,\Big\{ \,\frac{\partial
\,(h_2\,h_3\,b_1)}{\partial\,\xi_1}+\, \frac{\partial
\,(h_3\,h_1\,b_2)}{\partial\,\xi_2}+\, \frac{\partial
\,(h_1\,h_2\,b_3)}{\partial\,\xi_3}\,\Big\}=\,0\,,%
\label{mah}
\end{equation}
in $\,\Om\,$. From $b_i=\,\frac{\partial
\,b_i}{\partial\,\xi_j}=\,0\,$ on $\Ga\,$, $\,i,\,j=\,1,\,2\,$, one
gets
$$
 \frac{1}{h_1\,h_2\,h_3} \, \frac{\partial
\,(h_1\,h_2\,b_3)}{\partial\,\xi_3}=\,0\,,%
$$
on $\Ga\,.$ This leads to
\begin{equation}
\frac{1}{h_3}\, \frac{\partial
\,b_3}{\partial\,\xi_3}=\,-\,(\,\ka_1+\,\ka_2\,)\,b_3\,.%
\label{drei}
\end{equation}
From \eqref{ein}, \eqref{zwei}, and \eqref{drei}, it readily follows
that
\begin{equation}
[curl\,(\ua \times \,\ub)]_1=\,-\,2\,\ka_1\,a_1\,b_3\,.%
\label{vier}
\end{equation}
Analogously we may prove that
\begin{equation}
[curl\,(\ua \times \,\ub)]_2=\,-\,2\,\ka_2\,a_2\,b_3\,.%
\label{funf}
\end{equation}
\end{proof}
Note that the above result has a local character. In fact the proof
immediately applies to show the following. Let $\,U\,$ be an
arbitrary open set such that $\,\Ga_0=\,U\,\cap\,\Ga\,$ is not
empty. Further, assume that $\,\ua\,$ is a smooth divergence free
vector field in $\, \overline{\Om}_0\,,$ where
$\,\Om_0=\,U\,\cap\,\Om\,,$ which satisfies the boundary conditions
\eqref{bcnes} on $\,\Ga_0\,.$ Then, the identity \eqref{rotsen}
holds on $\,\Ga_0\,.$\par%
The next result follows from the theorems \ref{problas} and
\ref{geom}.
\begin{corollary}
Let $\,\ua\,$ be admissible. If, at some point $\,x_0 \in \,\Ga\,,$
\begin{equation}
b_3 \neq \,0\,,%
\label{cone}
\end{equation}
and if (at least) one of the two following conditions
\begin{equation}
\ka_1\,a_1 \neq\,0 \quad \textrm{or} \quad \ka_2\,a_2 \neq \,0\,,%
\label{cond}
\end{equation}
hold, then the inequality \eqref{noponto} takes place. In
particular, the persistence property fails. Consequently,
any strong inviscid limit result is false.%
\label{theocorol}
\end{corollary}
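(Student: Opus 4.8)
The plan is to read the statement straight off the main identity \eqref{rotsen}, invoking Theorem~\ref{problas} for the two assertions about persistence and the inviscid limit; essentially no computation is left to perform. The starting point is to evaluate \eqref{rotsen} at the prescribed boundary point $\,x_0\,$. By Theorem~\ref{geom} --- or, more precisely, by the local version recorded in the remark immediately after its proof, so that the identity is legitimately available at an arbitrary point of $\,\Ga\,$ and not only on $\,\Ga_0\,$ --- one has at $\,x_0\,$
\begin{equation*}
curl\,(\ua \times \,\ub)\,\times \,\n=\,-\,2\,b_3
\,(\ka_2\,a_2\,\ui_1 -\,\ka_1\,a_1\,\ui_2\,)\,.
\end{equation*}

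Next I would decide when this tangential vector is nonzero. Since $\,\ui_1\,$ and $\,\ui_2\,$ are two members of an orthonormal frame, they are linearly independent, so the right-hand side above vanishes if and only if $\,b_3\,\ka_2\,a_2=\,0\,$ \emph{and} $\,b_3\,\ka_1\,a_1=\,0\,$ simultaneously. The hypotheses \eqref{cone} (that is, $\,b_3\neq0\,$ at $\,x_0\,$) and \eqref{cond} (at least one of $\,\ka_1\,a_1\,$, $\,\ka_2\,a_2\,$ nonzero at $\,x_0\,$) are tailored precisely to exclude this, whence $\,curl\,(\ua \times \,\ub)\,\times \,\n\neq0\,$ at $\,x_0\,$; in other words, \eqref{noponto} holds there. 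It then remains only to cite Theorem~\ref{problas}: part~a), applied at $\,x_0\,$, yields that the persistence property with respect to $\,\ua\,$ fails, and part~b) yields that any strong inviscid limit result is false.

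I do not anticipate a genuine obstacle here --- the full weight of the corollary rests on Theorem~\ref{geom}, and the only verification beyond the substitution is the trivial linear independence of $\,\ui_1\,$ and $\,\ui_2\,$. The single point that deserves a moment's care is that Theorem~\ref{geom} was established under the standing hypothesis that $\,x_0\,$ is non-umbilical (so that the local curvilinear coordinate system exists); one should therefore make sure the corollary is invoked in a way consistent with that assumption. As noted in the Remarks, the umbilical/spherical situation is either avoidable or has already been treated in \cite{bvcr2}, so this causes no real difficulty.
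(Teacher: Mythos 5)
Your proposal is correct and matches the paper's (unwritten) argument exactly: the paper simply states that the corollary ``follows from the theorems \ref{problas} and \ref{geom}'', and the intended reasoning is precisely your substitution into the identity \eqref{rotsen} followed by the linear independence of $\,\ui_1\,$ and $\,\ui_2\,$ and an appeal to Theorem \ref{problas}. Your side remark about the non-umbilical hypothesis is also consistent with how the paper handles that point in its Remarks section.
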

Corollary \ref{theocorol} leads us to look for points $\,x_0 \in
\,\Ga\,$ for which \eqref{cone} and \eqref{cond} hold
simultaneously. Hence, to show that these two inequalities are not
independent is here of great help. The following result holds.
\begin{proposition}
Let $\,\ua\,$ be admissible, and assume that \eqref{cone} holds in
some point $\,x_0\in\,\Ga\,$. Then, there is a sequence of boundary
points $\,x_n\in\,\Ga\,,$ convergent to $\,x_0\,$, and such that
\begin{equation}
a_j(x_n) \neq \,0\,,%
\label{conedi}
\end{equation}
for, at least, one of the two index $\,j\,$. So, if $\,x_0 \in
\,\Si\,$,
\begin{equation}
b_3\,\ka_j\,a_j \neq \,0\,,%
\label{bka}
\end{equation}
at $\,x_n\,,$ at least for sufficient large values of $\,n\,$.%
\label{propco}
\end{proposition}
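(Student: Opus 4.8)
The plan is to establish the first assertion by contradiction, using the explicit expression \eqref{exprot} for the curl in curvilinear coordinates, and then to obtain the second assertion by a simple continuity argument.

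First I would suppose, towards a contradiction, that \eqref{conedi} holds in no $\Ga$-neighborhood of $x_0$; then there is a relatively open set $W\subset\Ga$ containing $x_0$ on which $a_1\equiv a_2\equiv 0$, and since $\ua$ is admissible we also have $a_3=0$ on $\Ga$, so $\ua$ vanishes identically on $W$. Next I would compute $b_3=[curl\,\ua]_3$ on $W$ by taking the third component of \eqref{exprot} with $\ov=\ua$, namely
\[
b_3=\frac{1}{h_1 h_2}\Big[\frac{\partial(h_2 a_2)}{\partial\xi_1}-\frac{\partial(h_1 a_1)}{\partial\xi_2}\Big],
\]
an expression involving only the tangential derivatives $\partial/\partial\xi_1$, $\partial/\partial\xi_2$ of $a_1$ and $a_2$ (together with the factors $a_1,a_2$ themselves). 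Since $a_1$ and $a_2$ vanish on the whole piece $W$ of the coordinate surface $\{\xi_3=0\}$, so do their tangential derivatives $\partial a_1/\partial\xi_2$ and $\partial a_2/\partial\xi_1$ there; hence each bracket above vanishes on $W$, so $b_3\equiv 0$ on $W$, and in particular $b_3(x_0)=0$, contradicting \eqref{cone}. Therefore every $\Ga$-neighborhood of $x_0$ contains a point at which $(a_1,a_2)\neq(0,0)$, and choosing such points in the $\Ga$-balls of radius $1/n$ about $x_0$ yields the desired sequence $x_n\to x_0$ satisfying \eqref{conedi} (passing to a subsequence one may even keep the index $j$ fixed, though this is not needed).

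For the second assertion I would assume $x_0\in\Si$, so that $\ka_1(x_0)\,\ka_2(x_0)\neq 0$. Since $\ka_1$ and $\ka_2$ are continuous on $\Ga$ and $b_3$ is continuous on $\oO$ with $b_3(x_0)\neq 0$, there is a $\Ga$-neighborhood of $x_0$ on which $b_3$, $\ka_1$ and $\ka_2$ are simultaneously nonzero. For $n$ large enough $x_n$ belongs to this neighborhood, and then $b_3(x_n)\,\ka_j(x_n)\,a_j(x_n)\neq 0$ for any index $j$ realizing \eqref{conedi} at $x_n$, which is precisely \eqref{bka}.

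The only step deserving comment is the vanishing of $\partial a_1/\partial\xi_2$ and $\partial a_2/\partial\xi_1$ on $W$, which is the elementary fact that a smooth function vanishing identically on an open portion of the coordinate surface $\{\xi_3=0\}$ has vanishing derivatives in the directions tangent to that surface. Beyond this, the argument uses only the identity \eqref{exprot} and continuity, so I do not expect any genuine difficulty; the real content of the proposition is that the normal component $b_3$ of the vorticity on $\Ga$ is manufactured entirely out of tangential derivatives of the tangential components of $\ua$, so $b_3(x_0)\neq 0$ is incompatible with $\ua$ vanishing on a $\Ga$-neighborhood of $x_0$.
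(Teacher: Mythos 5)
Your proof is correct and follows essentially the same route as the paper: argue by contradiction, observe that $b_3$ on $\Ga$ is given by \eqref{exprot} purely in terms of tangential derivatives of $a_1,a_2$, so the vanishing of $a_1,a_2$ on a $\Ga$-neighborhood of $x_0$ would force $b_3(x_0)=0$, contradicting \eqref{cone}. The only difference is that you spell out the continuity argument for \eqref{bka}, which the paper leaves implicit.
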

\begin{proof}
If there are neighborhoods $\,U_j\,$, $j=\,1,\,2\,$, where
\eqref{conedi} does not hold, then $\,a_1=\,a_2=\,0\,$ in $\,U_1
\cap\,U_2\,.$ On the other hand, equation \eqref{exprot} shows that
\begin{equation}
b_3=\,\frac{1}{h_1\,h_2} \,\Big(\,\frac{\partial \,(h_2
\,a_2)}{\partial\,\xi_1}-\, \frac{\partial \,(h_1
\,a_1)}{\partial\,\xi_2} \, \Big) \,.%
\label{pincas}
\end{equation}
It follows that $\,b_3=\,0\,$ on $\,U_1 \cap\,U_2\,.$ This
contradicts \eqref{cone}.
\end{proof}
From proposition \ref{propco} it follows that to prove the theorem
\ref{teobanilias} it is sufficient to show that, given any $\,x_0
\in \,\Si\,$, there is an admissible vector field $\,\ua\,$ such
that $\,b_3 \equiv \,b \times \,\n \neq\,0\,$ at $\,x_0\,$. This is
the aim of the next section.
\section{Proof of Theorem \ref{teobanilias}}
In this section we show an elementary way to explicitly construct
admissible vector fields $\,\ua\,$ for which \eqref{cone} holds at
any fixed $\,x_0\,\in\,\Ga\,.$ By choosing  $\,x_0\,\in\,\Si\,,$ we
prove the theorem \ref{teobanilias}. In the following, topological
properties of subsets of the boundary $\Ga$ concern the $\Ga$
topology (and not the $\oO\,$ topology).

\vspace{0.2cm}

Let $\,\bt(s)\,$ be a smooth real function on $\,\Ga\,$ such that
\begin{equation}
\int_{\Ga_j} \bt(s) \,ds=\,0\,,\quad \,j=\,0,...,\,m\,,%
\label{best}
\end{equation}
and define
\begin{equation}
\ub(s)=\,b(s)\,\n\,.%
\label{ubes}
\end{equation}
Clearly,
\begin{equation}
\int_{\Ga_j} \ub(s)\cdot\,\n \,ds=\,0\,\quad ,\,j=\,0,...,\,m\,.%
\label{bezero}
\end{equation}
It is well known that, under assumption \eqref{bezero}, there exists
in $\,\oO\,$ an extension $\,\ub(x)\,$ of $\,\ub(s)\,$ such that
\begin{equation}
div\,\ub(x)=\,0\,.%
\label{divo}
\end{equation}
On the other hand, it is well known that, under the constraints
\eqref{bezero} and \eqref{divo}, the linear problem
\begin{equation}
\left\{
\begin{array}{l}
div\,\ua =\,0\,,\\
curl\,\ua=\,\ub\,, \quad \textrm{in} \quad \Om\,,\\
\ua \cdot\,\n=\,0\,, \quad \textrm{on} \quad \Ga
\end{array}
\right.%
\label{vaquideo}
\end{equation}
is always solvable. This existence result is sufficient to our
purposes. However we recall that the solution is unique if $\,\Om\,$
is simply connected and that, in general, the kernel (set
$\,\ub=\,0\,$) of the related linear map has dimension $\,N\,$. For
a very clear and complete treatment of this, and related, problems
we refer the reader to the section 1, in reference \cite{foias-temam}.\par%
The following result shows a crucial advantage of our approach.
\begin{lemma}
For each $\,\bt(s)\,$ as above, the vector field $\,\ua\,$ is
admissible.
\end{lemma}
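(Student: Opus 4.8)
The plan is to verify, one at a time, the requirements in the Definition of admissibility for the field $\,\ua\,$ produced by the div-curl problem \eqref{vaquideo}, and to notice that the decisive one — the vorticity boundary condition — holds precisely because of the ansatz \eqref{ubes}. So two of the three items will be immediate from the construction, one will be for free, and only the smoothness will cost a line of elliptic regularity.

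\emph{Divergence and normal trace.} By construction, $\,div\,\ua=\,0\,$ in $\,\Om\,$ and $\,\ua\cdot\n=\,0\,$ on $\,\Ga\,$, which is the first line of \eqref{bcnes}. \emph{Vorticity boundary condition.} Again by construction, $\,curl\,\ua=\,\ub\,$ in $\,\Om\,$, and on $\,\Ga\,$ the extension $\,\ub(x)\,$ coincides with the prescribed boundary value $\,\ub(s)=\,b(s)\,\n\,$ from \eqref{ubes}; hence on $\,\Ga\,$ the vector $\,curl\,\ua\,$ is parallel to $\,\n\,$, so that $\,(curl\,\ua)\times\n=\,b\,(\n\times\n)=\,0\,$. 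This is exactly the second line of \eqref{bcnes} with $\,\ub=\,curl\,\ua\,$. In other words, forcing $\,\ub\,$ to be normal on $\,\Ga\,$ makes the delicate persistence-type boundary condition in the Definition hold automatically; this is the ``crucial advantage'' announced before the statement.

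\emph{Smoothness.} It remains to check that $\,\ua\,$ is smooth up to $\,\oO\,$. Since $\,\bt\in\,C^\infty(\Ga)\,$ and $\,\Ga\,$ is a smooth manifold, the divergence-free extension $\,\ub(x)\,$ of $\,b(s)\,\n\,$ whose existence was recalled above may be chosen in $\,C^\infty(\oO)\,$. Then $\,\ua\,$ solves the elliptic div-curl system \eqref{vaquideo} with smooth data, and the classical regularity theory for that system under the side condition $\,\ua\cdot\n=\,0\,$ (see \cite{foias-temam}) yields, after the usual bootstrap, $\,\ua\in\,C^\infty(\oO)\,$; in particular $\,div\,\ua=\,0\,$ throughout $\,\oO\,$. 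I expect this last point to be the only mildly technical step: one has to know that the extension can be taken smooth and that the interior and boundary elliptic estimates for $\,(div,\,curl)\,$ iterate up to $\,\Ga\,$ — both standard — or, more economically, one simply invokes a smooth-data version of the existence statement quoted just before \eqref{vaquideo}. Collecting the three items, $\,\ua\,$ is admissible.
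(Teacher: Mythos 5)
Your proof is correct and follows the same route as the paper: the only non-trivial point is that \eqref{ubes} forces $\,\ub\,$ to be parallel to $\,\n\,$ on $\,\Ga\,$, so the second condition in \eqref{bcnes} holds automatically, which is exactly the one-line argument the authors give. Your additional checks of the divergence, normal trace, and smoothness are routine elaborations of what the paper takes for granted from the construction \eqref{vaquideo}.
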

In fact, \eqref{ubes} together with the second equation
\eqref{vaquideo}, implies the second boundary condition
\eqref{bcnes}.

\vspace{0.2cm}

Given $\,\bt(s)\,$ as above we denote by $\,\La[\bt]\,$ the set of
boundary points defined by
\begin{equation}%
\La[\bt]=\,\{s\in\,\Ga\,:\, \bt(s)\neq\,0\,\}\,.%
\label{labas}
\end{equation}
The following result is obvious.
\begin{lemma}
We may choose functions $\,\bt\,$ for which $\,\La[\bt]=\,\Ga\,,$
except for $\,m+\,1\,$ arbitrary, closed simple curves, $\,C_j
\subset \,\Ga_j\,$, $j=\,0,\,...,\,m\,$. These curves may be
arbitrarily chosen.%
\label{meios}
\end{lemma}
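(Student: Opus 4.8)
The plan is to construct the function $\bt(s)$ directly. First I would fix, on each connected component $\Ga_j$ of the boundary, an arbitrary closed simple curve $C_j\subset\Ga_j$. Since $\Ga_j$ is a smooth compact surface without boundary, the function $s\mapsto \mathrm{dist}_{\Ga_j}(s,C_j)$ is continuous, and on a tubular neighbourhood of $C_j$ it is smooth away from $C_j$ itself; what I actually want is a smooth function $d_j$ on $\Ga_j$ that vanishes exactly on $C_j$ and is nonzero elsewhere. Such a function is easy to produce: take any defining function for $C_j$ in a tubular neighbourhood (where $C_j$ is a smooth embedded curve, it is locally a regular level set), multiply by a cut-off, and patch with a strictly positive bump supported away from $C_j$; concretely, one may simply take $d_j = \rho_j^2$ where $\rho_j$ is a smooth function vanishing transversally on $C_j$, so that $d_j \ge 0$, $d_j$ is smooth, and $\{d_j = 0\} = C_j$.

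Next I would correct $d_j$ so that its integral over $\Ga_j$ vanishes, which is the constraint \eqref{best}. Pick any smooth $\psi_j$ on $\Ga_j$ with $\psi_j\equiv 0$ in a neighbourhood of $C_j$ and $\int_{\Ga_j}\psi_j\,ds > 0$ (for instance a nonnegative bump supported in a small disc disjoint from $C_j$), and set
\begin{equation*}
\bt\big|_{\Ga_j} = d_j - \lambda_j\,\psi_j,\qquad \lambda_j = \frac{\int_{\Ga_j} d_j\,ds}{\int_{\Ga_j}\psi_j\,ds}\,.
\end{equation*}
Then $\int_{\Ga_j}\bt\,ds = 0$ by construction. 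On the support of $\psi_j$ the two terms could conceivably cancel, so to be safe one shrinks the support of $\psi_j$ and/or scales it so that the cancellation locus is at worst another closed curve; but the cleaner route is to choose $\psi_j$ supported in a tiny disc and note that the zero set of $\bt$ inside that disc is then a compact subset of the disc, which we may simply absorb into the ``exceptional'' set — or, more elegantly, redefine $C_j$ to be the curve we obtain. In the statement as given we only need the \emph{existence} of \emph{some} family of exceptional closed simple curves and the fact that $C_j$ can be prescribed; so after fixing the prescribed $C_j$ we build $d_j$ and then absorb the small extra zero set near the bump into a slight thickening, or alternatively place the bump in a region we do not care about and observe that generically the new zero locus is again a simple closed curve. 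Finally, I would check that the resulting $\bt$, and hence via \eqref{ubes}, \eqref{divo}, \eqref{vaquideo} the associated $\ua$, is admissible: this is exactly the content of the preceding lemma, so nothing further is needed there. By \eqref{labas} we then have $\La[\bt] = \Ga \setminus \bigcup_j C_j$, as claimed.

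The main obstacle I anticipate is the second-order term in the correction step: guaranteeing that subtracting the bump $\lambda_j\psi_j$ does not create an \emph{uncontrolled} new zero set, i.e.\ keeping $\La[\bt]$ equal to $\Ga$ up to finitely many \emph{curves} rather than up to some messy closed set. The honest fix is that the lemma only asks for closed simple curves $C_j$ that ``may be arbitrarily chosen,'' so I would first choose the $C_j$, then choose the bump $\psi_j$ with support in a fixed small disc $D_j$ disjoint from all the prescribed curves, and finally note that we are free to enlarge each $C_j$ slightly, or add one auxiliary small circle inside $D_j$, to swallow the bump's zero set. Since the number of such curves remains finite, the conclusion stands; the argument is soft and requires no estimates beyond elementary partition-of-unity constructions on $\Ga$.
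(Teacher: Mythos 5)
The paper offers no proof of this lemma at all (it simply declares the result ``obvious''), so the comparison can only be against what the statement requires. Your construction has a genuine gap, one you flag yourself but do not actually close. Because you take the main term $d_j=\rho_j^2\ge 0$, and a nonnegative function with vanishing mean on $\Ga_j$ is identically zero, the correction $-\lambda_j\psi_j$ needed to enforce \eqref{best} \emph{must} force $\bt$ to change sign inside the support of the bump; the new zero set $\{d_j=\lambda_j\psi_j\}$ is then an uncontrolled compact subset of the disc $D_j$ --- not, in general, a simple closed curve, nor finitely many of them, and in any case not one of the prescribed $C_j$. The proposed repairs (``absorb into a slight thickening'', ``add one auxiliary small circle'', ``generically it is again a simple closed curve'') do not establish the statement: the lemma asserts that $\Ga\setminus\La[\bt]$ consists \emph{exactly} of the $m+1$ arbitrarily prescribed curves, a thickened curve is not a curve, and genericity of the extra zero locus is neither proved nor sufficient. (For the application to Theorem \ref{teobanilias} your $\bt$ would still do, since there one only needs $\bt(x_0)\neq 0$ together with \eqref{best}; but as a proof of Lemma \ref{meios} it falls short.)

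The correct route is to let $\bt$ change sign \emph{across} $C_j$ rather than away from it. If $C_j$ separates $\Ga_j$ into two open pieces $A_j^{+}$ and $A_j^{-}$, choose a smooth $f_j$ with $f_j^{-1}(0)=C_j$, $f_j>0$ on $A_j^{+}$, $f_j<0$ on $A_j^{-}$ (a signed defining function in a collar, glued to nonvanishing functions of the appropriate sign outside it). Then adjust the balance between $\int_{\Ga_j}f_j^{+}$ and $\int_{\Ga_j}f_j^{-}$ without creating new zeros, for instance by replacing $f_j$ with $H_c\circ f_j$, where $H_c$ is smooth, strictly increasing, equal to the identity near $0$ and to $c\,t$ for $t$ large positive; the integral depends continuously and monotonically on $c>0$ and ranges over an interval containing $0$, so the intermediate value theorem gives the required $c$. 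Note finally that the words ``arbitrarily chosen'' need the proviso that each $C_j$ \emph{separates} $\Ga_j$: if $C_j$ is non-separating (possible when $\Ga_j$ has positive genus), then $\Ga_j\setminus C_j$ is connected, any continuous $\bt$ vanishing exactly on $C_j$ has constant sign there, and \eqref{best} cannot hold --- so no construction whatsoever, yours included, can realize such a curve. When $\Ga_j$ is a sphere this is automatic.
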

\emph{Proof of Theorem \ref{teobanilias}}: Let $\,x_0\,$ be an
arbitrary, but fixed, boundary-point where the Gaussian curvature
does not vanish. Taking into account Lemma \ref{meios}, we fix a
real function $\,\bt(s)\,$ such that $\,\bt(x_0) \neq\,0\,,$ and
construct $\,\ua\,$ as above. From proposition \ref{propco}, and
theorems \ref{geom} and \ref{problas}, the thesis follows.
\begin{remark}
Define (recall \eqref{noponto})
$$
K[\bt]=\,\{s\in\,\Ga\,:\, curl\,(\ua \times \,\ub)\,\times \,\n \neq\,0\,\}\,.%
$$
By appealing to our argument, we may prove that the set $\,K[\bt]\,$
is dense in $\,\Si \cap\,\La[\bt]\,,$ and that there are functions
$\,\bt(s)\,$ for which $\,K[\bt]\,$ is dense in
$\Si\,$.%
\end{remark}

{\it Acknowledgments.}\; The work of the second author was supported
by INdAM (Istituto Nazionale di Alta Matematica) through a Post-Doc
Research Fellowship at Dipartimento di Matematica Applicata,
University of Pisa.

\end{document}